\documentclass[a4paper,12pt,reqno]{amsart}

\usepackage{color}

\usepackage{amsmath,amstext,amssymb,amsopn,amsthm}

\usepackage[margin=30mm]{geometry}
\usepackage{eucal,mathrsfs,dsfont}
\usepackage{soul}
\usepackage{color}
\definecolor{mr}{rgb}{0.1,0.2,0.7}
\definecolor{tk}{rgb}{0.7,0.1,0.2}

\newtheorem{theorem}{Theorem}[section]

\newtheorem{lemma}[theorem]{Lemma}

\theoremstyle{definition}

\theoremstyle{remark}
\newtheorem{remark}[theorem]{Remark}
\newtheorem{example}[theorem]{Example}

\renewcommand{\mathcal}{\mathscr}
\newcommand{\E}{\mathds{E}}
\renewcommand{\P}{\mathds{P}}
\newcommand{\eps}{\varepsilon}

\newcommand{\Pp}{\mathds{P}} 
\newcommand{\R}{\mathds{R}}

\newcommand{\wt}{\widetilde}

\newcommand{\di}{\mathrm{d}}
\newcommand{\be}{\begin{equation}}
\newcommand{\ee}{\end{equation}}
\newcommand{\ba}{\begin{aligned}}
\newcommand{\ea}{\end{aligned}}

\newcommand{\D}{\mathds{D}}

\DeclareMathOperator{\supp}{supp}

\title[Support theorem for L\'evy driven SDEs]{Support theorem for L\'evy driven SDEs}

\author[A. Kulik]{Alexei Kulik}

\address{Faculty of Pure and Applied Mathematics, Wroc{\l}aw University of Science and Technology, Wyb. Wyspia{\'n}skiego 27, 50-370 Wroc{\l}aw, Poland.}

\pagestyle{headings}

\begin{document}

\begin{abstract} We provide a support theorem for the law of the solution to an SDE with jump noise. This theorem applies to  general SDEs with jumps and is illustrated by examples of SDEs with quite degenerate jump noises, where the theorem leads to an informative description of the support.
\end{abstract}

\maketitle

\section{Introduction}  The aim of this note is to describe the topological support of the law of the solution to a general SDE with jump noise. For diffusions such a description is provided by the classical Stroock-Varadhan support theorem, see \cite{StroockWaradhan}. The natural question of an extension of this theorem to SDEs with jumps was studied intensively at the end of 90-ies -- early 2000-ies by H.Kunita, Y.Ishikawa, and T.Simon, see \cite{Kunita}, \cite{Simon}, \cite{Ishikawa}. It appears that, in the jump noise setting, the two cases should be naturally separated, named in \cite{Simon}  the \emph{`Type I'} and \emph{`Type II'} SDEs. In plain words, an SDE is of `Type I' if the small jumps are absolutely integrable and of `Type II' otherwise. A `Type I' SDE admits a simple and intuitively clear description of the support; this case was studied completely in \cite{Simon}. `Type II' SDEs  had been studied under some additional limitations only, which can be understood as certain technical  \emph{convexity}  and \emph{scaling}-type assumptions on the L\'evy measure of the noise. The latter `scaling' assumption (see H2 in \cite{Simon} or \eqref{H2} below) is quite restrictive and requires the jump noise to be close to an $\alpha$-stable one, in a sense. This limitation is of a technical nature and is caused by the method used in  \cite{Simon}, \cite{Ishikawa} rather than the problem itself, hence a natural question arises how to remove it in order to get a general support theorem for SDEs with jumps, free from any technical limitations. This question was solved in the one-dimensional setting in  \cite{Simon3} and for canonical (Markus) equations in \cite{Simon4}. For general multidimensional It\^o equations with jumps it apparently requires alternative methods, and remained open since early 2000ies.

In this note, we propose a method for proving a support theorem for general `Type II' SDEs, based on a change of measure and  free from any  non-natural technical limitations. The description we obtain for the topological support for the law of the solutions to  jumping SDEs is of a considerable importance because of its natural applications, in particular, to the study of the \emph{ergodic properties} of the solution to SDE, considered as a Markov process and the \emph{strong maximum principle} for the generator of this process; see more discussion in Section \ref{s23} below.

The structure of the paper is as follows. In Section \ref{s2} the main statement is formulated and provided by a discussion and examples. The proof of the main statement is given in Section \ref{s3}, and to improve readability the  proof of the key lemma (Lemma \ref{l-main}) is postponed  to a separate Section \ref{s4}. For the same reason, proof of a technical estimate \eqref{phi_delta} is given in Appendix \ref{sA}.
\\

\textbf{Acknowledgements.} This research was supported  by the Polish National Science Center grant 2019/33/B/ST1/02923. The author is grateful to Tomas Simon for inspiring discussion and bibliography information.

\section{Main result}\label{s2}

\subsection{Preliminaries} Let $N(\di u,\di t)$ be a Poisson point measure (PPM) on $\R^d\times[0, \infty)$  with the intensity measure $\mu(\di u)\di t$, where $\mu(\di u)$ is a \emph{L\'evy measure}, i.e.
$$
\int_{\R^d}(|u|^2\wedge 1)\mu(\di u)<\infty.
$$
We consider an SDE in $\R^m$
\be\label{SDE}
\di X_t=b(X_t)\, \di t+\int_{|u|\leq 1}c(X_{t-},u) \wt N(\di u,\di t)+ \int_{|u|>1} c(X_{t-},u) N(\di u,\di t), \quad X_0=x_0,
\ee
where $\wt N(\di u,\di t)=\wt N(\di u,\di t)-\mu(\di u)\di t$ is the corresponding compensated PPM.

 The coefficient $c(x,u)$ is assumed to have the form
\be\label{AL}
c(x,u)=\sigma(x)u+r(x,u),
\ee
where $r(x,u)$ is negligible, in a sense, w.r.t. $|u|$ for small values of $|u|$ (see below). In the case $r(x,u)\equiv 0$ equation \eqref{SDE} transforms to
\be\label{SDE_1}
\di X_t=b(X_t)\, \di t+\sigma(X_{t-})\di Z_t, \quad X_0=x_0,
\ee
where the \emph{L\'evy process} $Z$ is given by its It\^o-L\'evy decomposition
$$
\di Z_t=\int_{|u|\leq 1}u \wt N(\di u,\di t)+ \int_{|u|>1} u N(\di u,\di t).
$$

We assume the following.

$\mathbf{H}_1.$ The functions $b:\R^m\to \R^m$ and $\sigma:\R^m\to \R^{m\times d}$ are Lipschitz continuous.

$\mathbf{H}_2.$ There exist constants $C>0, \beta>0$ such that
$$
|r(x,u)-r(y,u)|\leq C|x-y||u|^\beta, \quad |r(x_0,u)|\leq C|u|^\beta,
$$
and
\be\label{moment_mu}
\int_{|u|\leq 1}|u|^\beta\mu(\di u)<\infty.
\ee

If $\beta\leq 1,$ then $\mathbf{H}_2$ yields that the entire function $c(x,u)$ is absolutely integrable w.r.t. $\mu(\di u)$ on the set $\{|u|\leq 1\}$; that is, equation \eqref{SDE} has Type I in the terminology of \cite{Simon}. This (comparatively simple) case  is already studied completely, hence in what follows we consider the case $\beta>1$, only. Note that, in this case  $r(x,u)\leq C(|x-x_0|+1)|u|^\beta\ll |u|$ for small $|u|$, and the linear part $\sigma(x)u$ is the principal one in the decomposition \eqref{AL} for $c(x,u)$.

Recall that the Skorokhod space $\D([0,T], \R^m)$ is the set of c\`adl\`ag functions on $[0, T]$ with the metric
$$
d(f,g)=\sup_{\lambda\in \Lambda_{0,T}}\left(\sup_{0\leq s<t\leq T}\left|\log\frac{\lambda_t-\lambda_s}{t-s}\right|+\sup_{t\in [0,T]}|f(\lambda_t)-g(t)|\right),
$$
where $T>0$ and  $\Lambda_{0,T}$ denotes the set of strictly increasing continuous functions $\lambda:[0, T]\to [0, T]$ such that $\lambda(0)=0, \lambda(T)=T$. It is known that $\D([0,T], \R^m)$ with $d(\cdot, \cdot)$ is a Polish space, e.g. \cite[Section 14]{Bil68}.

Under the assumptions $\mathbf{H}_1$, $\mathbf{H}_2$,  SDE \eqref{SDE} has unique strong solution $X$, see \cite[Theorem~IV.9.1]{IkWa}. We fix \emph{a time horizon} $T>0$, and consider  this solution on the time interval $[0,T]$. The law of this solution in the Skorokhod space $\D([0,T], \R^m)$ will be denoted by $\mathrm{Law}_{x_0,T} (X)$. We aim to describe the support of this law; recall that the (topological) support of a measure $\kappa$ on a metric space $S$ with Borel $\sigma$-algebra is the minimal closed subset $F$ such that $\kappa(S\setminus F)=0$,  this set is denoted by $\supp(\kappa)$. Alternatively, $x\in \supp(\kappa)$ if, and only if,
for any open ball $B$ centered at $x$  one has $\kappa(B)>0$.
\subsection{The main statement}

To describe the support of $\mathrm{Law}_{x_0,T} (X)$ we introduce some constructions. First, we
introduce a kernel $J(x,\di y)$ on $\R^m$ by
$$
J(x, A)=\mu\big(\{u:x+c(x,u)\in A\}\big), \quad A\in \mathcal{B}(\R^m),
$$
and define the set $\mathcal{A}\subset \R^m\times \R^m$ of `admissible jumps' by
$$
(x,y)\in \mathcal{A} \Longleftrightarrow y\in \mathrm{supp}(J(x, \cdot)).
$$
Next, denote by $L$ the set of $\ell\in \R^d$ such that
$$
\int_{|u|\leq 1}|u\cdot\ell| \mu(\di u)<\infty;
$$
here and below we use notation $a\cdot b$ for the scalar product in $\R^d$. It is easy to check that  $L$ is a vector subspace in $\R^d$; we call it the `integrability subspace' for $\mu$ furthermore.  Denote by $u_L$ the orthogonal projection of $u$ on $L$, then
$$
\int_{|u|\leq 1}|u_L| \mu(\di u)<\infty
$$
and under the assumptions $\mathbf{H}_1$, $\mathbf{H}_2$ the following function is well defined and is Lipschitz continuous:
$$
\tilde b(x)=b(x)-\int_{|u|\leq 1}\sigma(x)u_L \mu(\di u)-\int_{|u|\leq 1}r(x,u)\mu(\di u).
$$
Denote by $\mathbf{F}^{\mathrm{const}}_{0,T}$ and $\mathbf{F}^{\mathrm{step}}_{0,T}$ the following classes of functions $f:[0,T]\to \R^d$:
\begin{itemize}
  \item any $f\in \mathbf{F}^{\mathrm{const}}_{0,T}$ takes a constant value in the orthogonal complement $L^\perp$ of the subspace $L$ in $\R^d$;
  \item for any $f\in \mathbf{F}^{\mathrm{step}}_{0,T}$ there exists a partition $[0,T]=[0, \tau_1)\cup[\tau_1,\tau_2)\cup\dots\cup[\tau_j,T]$ such that, on each interval of the partition, $f$ takes a constant value in $L^\perp.$
\end{itemize}
Finally, denote by $\mathbf{S}^{\mathrm{const}}_{0,T,x_0}$ and $\mathbf{S}^{\mathrm{step}}_{0,T,x_0}$ the classes of the solutions to the Cauchy problem for the following piece-wise  ordinary differential equations
\be\label{skeleton}
\phi_t=x_0+\int_0^t\Big(\tilde b(\phi_s)+\sigma(\phi_s)f_s\Big)\, \di s+\sum_{t_k\leq t}\triangle_{t_k}\phi,
\ee
where $\{t_k\}\subset (0,T)$ is an arbitrary finite set,  $f$ is an arbitrary function from the class $\mathbf{F}^{\mathrm{const}}_{0,T}$ or $\mathbf{F}^{\mathrm{step}}_{0,T},$ respectively, and the jumps of the function $\phi$ at the time moments $\{t_k\}$ satisfy
\be\label{admissible}
(\phi_{t_k-}, \phi_{t_k})\in \mathcal{A}\hbox{ for any }k.
\ee
 Denote by $\overline{\mathbf{S}^{\mathrm{const}}_{0,T,x_0}}$ and $\overline{\mathbf{S}^{\mathrm{step}}_{0,T,x_0}}$ the closures of these classes in $\mathds{D}([0,T], \R^m)$.

\begin{theorem}\label{thm_support} Assume $\mathbf{H}_1$, $\mathbf{H}_2$. Then for any $T>0$
$$
\supp\Big(\mathrm{Law}_{x_0,T} (X)\Big)=\overline{\mathbf{S}^{\mathrm{const}}_{0,T,x_0}}=\overline{\mathbf{S}^{\mathrm{step}}_{0,T,x_0}}.
$$
\end{theorem}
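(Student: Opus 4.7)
\emph{Overall strategy.} The plan is to establish the sandwich
$$\overline{\mathbf{S}^{\mathrm{const}}_{0,T,x_0}}\ \subseteq\ \supp\big(\mathrm{Law}_{x_0,T}(X)\big)\ \subseteq\ \overline{\mathbf{S}^{\mathrm{step}}_{0,T,x_0}},$$
and then to close the chain by the reverse inclusion $\overline{\mathbf{S}^{\mathrm{step}}_{0,T,x_0}}\subseteq\overline{\mathbf{S}^{\mathrm{const}}_{0,T,x_0}}$; the inclusion $\overline{\mathbf{S}^{\mathrm{const}}}\subseteq\overline{\mathbf{S}^{\mathrm{step}}}$ is immediate from $\mathbf{F}^{\mathrm{const}}_{0,T}\subset\mathbf{F}^{\mathrm{step}}_{0,T}$.

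\emph{Lower bound via change of measure.} Fix $\phi\in\mathbf{S}^{\mathrm{const}}_{0,T,x_0}$ with a constant control $f\in L^\perp$ and admissible jumps at times $\{t_k\}$. I would apply a Girsanov-type transform for the PPM $N$: choose a predictable, strictly positive intensity modification $\rho(t,u)$ such that, under the equivalent measure $\Q$ obtained via the Dol\'eans exponential of $\int(\rho-1)\,\wt N(\di u,\di t)$, (i)~the compensator of the small jumps, projected onto $L^\perp$, acquires the additional drift $\sigma(X_{t-})f\,\di t$, and (ii)~on arbitrarily small neighbourhoods of each $t_k$ the intensity is locally enhanced around the admissible vector $\phi_{t_k}-\phi_{t_k-}$, forcing $X$ to perform a jump close to that vector. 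A stability estimate for the skeleton equation of the form \eqref{phi_delta}, together with the strict positivity of $\E[\di\Q/\di\P]$, then yields $\P(d(X,\phi)<\eps)>0$ for every $\eps>0$. This is the content of Lemma~\ref{l-main}.

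\emph{Upper bound by pathwise approximation, and closure identity.} To show that almost surely $X\in\overline{\mathbf{S}^{\mathrm{step}}_{0,T,x_0}}$, I would truncate the jumps at a level $\delta>0$: large jumps ($|u|>\delta$) are finitely many on $[0,T]$ and, by the very definition of $\mathcal{A}$, provide admissible jumps of $X$. Split the small jumps via $u=u_L+u_{L^\perp}$: the $L$-component is absorbed into $\tilde b$ by construction of $\tilde b$, while the compensated $L^\perp$-component is approximated on a mesh of width $h$ by the piecewise constant vector
$$f^{\delta,h}_s \;=\; \frac{1}{h}\int_{(s-h)\vee 0}^{s}\int_{|u|\le\delta} u_{L^\perp}\,N(\di u,\di r)\ \in\ L^\perp,$$
which lies in $\mathbf{F}^{\mathrm{step}}_{0,T}$. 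A standard $L^2$-bound for PPM-stochastic integrals combined with a Gronwall estimate for \eqref{SDE} shows that the resulting step-skeleton converges to $X$ in $\D([0,T],\R^m)$ in probability as $\delta,h\to0$ appropriately, hence a.s.\ along a subsequence. For the residual inclusion $\overline{\mathbf{S}^{\mathrm{step}}}\subseteq\overline{\mathbf{S}^{\mathrm{const}}}$, fix a single value $v=v_1$ in a step control $(v_1,\dots,v_j)$ and, on each piece $[\tau_k,\tau_{k+1})$, absorb the drift discrepancy by inserting a growing number of admissible micro-jumps whose cumulative displacement converges to $\int_{\tau_k}^{\tau_{k+1}}\sigma(\phi_s)(v_k-v_1)\,\di s$; this is possible because $\supp(\mu)$ near the origin generates a dense subset of $L^\perp$.

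\emph{Expected difficulty.} The real work is in the lower bound. Since the $L^\perp$-component of small jumps is precisely the non-integrable one, the intensity modification that should produce the drift $\sigma(X_{t-})f\,\di t$ is singular near $u=0$, so a regularisation and limit argument is unavoidable, and the resulting density process must be shown to be a genuine (not merely local) martingale with second moment controlled uniformly in the regularising parameter. Combining this with the jump-enhancement at the $t_k$'s, while keeping $\E[\di\Q/\di\P]$ uniformly bounded below on a small $\eps$-tube around $\phi$, is where the hypotheses $\mathbf{H}_1,\mathbf{H}_2$ with $\beta>1$ should enter decisively.
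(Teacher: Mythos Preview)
Your sandwich is inverted relative to the paper's, and this inversion creates two avoidable difficulties. The paper proves
\[
\supp\big(\mathrm{Law}_{x_0,T}(X)\big)\subseteq\overline{\mathbf{S}^{\mathrm{const}}_{0,T,x_0}}
\quad\text{and}\quad
\overline{\mathbf{S}^{\mathrm{step}}_{0,T,x_0}}\subseteq\supp\big(\mathrm{Law}_{x_0,T}(X)\big),
\]
and then the trivial inclusion $\overline{\mathbf{S}^{\mathrm{const}}}\subseteq\overline{\mathbf{S}^{\mathrm{step}}}$ closes the chain. With your orientation you need $\overline{\mathbf{S}^{\mathrm{step}}}\subseteq\overline{\mathbf{S}^{\mathrm{const}}}$, and your micro-jump argument for it is not obviously correct: the claim that ``$\supp(\mu)$ near the origin generates a dense subset of $L^\perp$'' in a way that lets you absorb an arbitrary drift $\sigma(\phi_s)(v_k-v_1)$ by admissible jumps is precisely the kind of geometric hypothesis on $\mu$ (a convexity/cone condition) that the paper is trying to \emph{avoid}. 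Likewise, your upper bound via random running averages $f^{\delta,h}$ is more intricate than necessary: the paper simply truncates at level $\eta$, observes that the truncated equation can be rewritten with a \emph{single deterministic constant} $\upsilon_\eta=\int_{\eta\le|u|\le 1}(u-u_L)\,\mu(\di u)\in L^\perp$ as control, so that $X^\eta\in\mathbf{S}^{\mathrm{const}}_{0,T,x_0}$ pathwise, and then lets $\eta\to 0$.

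The more serious gap is in your ``expected difficulty''. You anticipate that producing the drift $\sigma(X_{t-})f$ with $f\in L^\perp$ via a Girsanov shift of the PPM forces a density modification that is \emph{singular near $u=0$}, with an unavoidable regularisation and delicate martingale control. This is exactly what the paper's key observation circumvents. Because $\int_{|u|<\eta}|u\cdot\ell|\,\mu(\di u)=+\infty$ for every $\ell\in L^\perp\setminus\{0\}$ and every $\eta>0$, a separating-hyperplane argument shows that for any target $w\in L^\perp$ there exist $\zeta\in(0,\eta)$ and a function $g:\R^d\to[-\tfrac12,\tfrac12]$ supported on the \emph{annulus} $\{\zeta<|u|<\eta\}$ with $\int(u-u_L)g(u)\,\mu(\di u)=w$. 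Using $1+g$ as the intensity ratio therefore gives a Radon--Nikodym derivative that is bounded above and bounded away from zero, with no limit procedure at $u=0$ at all; the martingale property and the lower bound on $\E[\di\Q/\di\P]$ on a tube around $\phi$ are then immediate. This is the decisive idea (the paper's Lemma~\ref{l1}), and your proposal does not contain it.
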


 In plain words the  above description of the support can be explained as follows. The SDE \eqref{SDE} contains two parts, the `deterministic flow' part which corresponds to the drift, and the `stochastic jump part' which corresponds to the PPM. These two parts are still related trough the compensator term, which is involved in the stochastic integral w.r.t. $\wt N(\di u, \di t)$. In the simple case where $L=\R^d$, and thus the SDE is of the Type I, these parts can be  completely separated, and \eqref{SDE} can be written in the form
\be\label{SDE_2}
\di X_t=\wt b(X_t)\, \di t+\int_{\R^d}c(X_{t-},u)  N(\di u,\di t),
\ee
where the \emph{effective drift} coefficient is defined by $\wt b(x)=b(x)-\int_{|u|\leq 1}c(x,u)\, \mu(\di u)$. In this case \eqref{skeleton} does not contain the part with $f$ because $L^\perp=\{0\}$, and the description of the support of the law of the solution $X$ is intuitively clear: the solution  follows the deterministic flow defined by the effective drift, then makes a jump, admissible for the stochastic jump part,   then again follows the deterministic flow, etc. This description was thoroughly proved in \cite[Theorem~I]{Simon}. The general (Type II) case is more sophisticated, since the compensator term cannot be separated from the stochastic integral. Theorem \ref{thm_support} actually tells us that the above description of the support remains essentially correct, with the following two important changes:
\begin{itemize}
  \item the effective drift involves only the parts of $c(x,u)$ which are absolute integrable, i.e. $r(x,u)$ and $\sigma(x) u_L$;
  \item the `non-integrability subspace' $L^\perp$ induces an extra drift part, which may act at \emph{arbitrary} direction of the form $\sigma(x) h, h\in L^\perp.$
\end{itemize}
Such a description of the support was
  conjectured by T. Simon, see \cite[Remark 3.3(c)]{Simon}, by analogy with a similar result proved for L\'evy processes in \cite{Simon2}. Theorem \ref{thm_support} proves this conjecture in a wide generality, i.e. without any specific assumptions on the SDE except the natural conditions $\mathbf{H}_1$, $\mathbf{H}_2$ which yield strong existence of the solution.

\begin{remark} Our description of admissible jumps differs from the one adopted in \cite{Simon}, which requires that
$$
y=x+c(x,u), \quad u\in \mathrm{supp}(\mu).
$$
These two descriptions coincide for $c(x,u)$  continuous in $u$; for discontinuous $c(x,u)$ the latter one is no longer applicable. To see this, one can consider a simple example where $m=d=1,$ $\mu$ is  a finite  discrete measure
 $\mu$ which has a full support in $\R$, $m=1$, and $c(x,u)=1_K(u)+21_{\R\setminus K}(u)$, where $K$ is a countable dense subset in $\R$ such that $\mu(\R\setminus K)=0$.
\end{remark}
\subsection{Discussion: applications and examples}\label{s23}\phantom{}\\
 Support theorems are involved naturally in the study of ergodic properties of the Markov process associated to the SDE; namely, they provide a natural tool for proving that the process is topologically irreducible, see  \cite[Proof of Proposition~5.3]{HMS11} or \cite[Theorem~1.3~ and~Section~4.3]{Kulik}. This gives a natural application field for Theorem \ref{thm_support}. Namely, denote by $S_T(x_0)$ the closure of the set of the values $\phi_T$, where $\phi$ runs through the solutions to \eqref{skeleton} with arbitrary finite set
 $\{t_k\}\subset (0,T)$, jumps of $\phi$ satisfying \eqref{admissible}, and  arbitrary function $f$  from the class $\mathbf{F}^{\mathrm{step}}_{0,T}.$ By Theorem \ref{thm_support}, the solution to \eqref{SDE} satisfies 
$$
\supp\Big(\mathrm{Law}\, (X_T)\Big)=S_T(x_0).
$$
Thus, in order to provide the topological irreducibility for the Markov process associated to \eqref{SDE}, it is sufficient to show that
 \be\label{full}
 S_T(x)=\R^m\hbox{  for all }x\in \R^m.
 \ee

Another application of the support theorems dates back to the original paper by Stroock and Varadhan \cite{StroockWaradhan}, and concerns \emph{the strong maximal principle} for the generator $\mathscr{L}$ of the Markov process $X$, which states that any sub-harmonic function for $\mathscr{L}$ which reaches its maximum on the given set is constant on this set. We refer an interested reader for a detailed discussion to \cite[Theorem IV.8.3]{IkWa} or  \cite[Remark 3.3(b)]{Simon}. Here, we mention briefly the following simple corollary: if $\bigcup_{T>0} S_T(x)$ is dense in $\R^m$ for any $x\in \R^m$, then
the strong maximal principle for $\mathscr{L}$ holds true on the entire $\R^m$.  For this property to hold, it is clearly sufficient that \eqref{full} holds true for some $T>0$.

  Below, we give two simple
sufficient conditions where \eqref{full} holds true for any $T>0$.

\begin{example}\label{ex1} \emph{(Jump noise satisfying the `cone condition').}
  Let  there exist $\theta\in (0,1)$ such that, for any $\ell\in \R^d, |\ell|=1$ and $\eps>0$, the intersection of the cone $\{u:u\cdot \ell\geq\theta |u|\}$ with the ball $\{|u|< \eps\}$ has positive measure $\mu$. Assume also that $\sigma(x)$ is \emph{point-wise degenerate}, i.e. $\mathrm{rank}\, \sigma(x)=m\leq d, x\in \R^m$. Then $S_T(x)=\R^m$ for all $x\in \R^m, T>0$. To prove this assertion, one can take $f\equiv 0$ and organize for given $x,y\in \R^m, \eps>0$ and $T>0$  sequences of (frequent) jump times $\{t_k\}$ and (small) jumps amplitudes $u_k$ which would force the solution to \eqref{skeleton} with $x_0=x$ to take the final value $y$ with $|\phi_T-y|<\eps$. This construction is essentially the same as in the proof of \cite[Proposition~4.17]{Kulik}, thus we omit the details here.
\end{example}
\begin{example}\label{ex2} \emph{(Jump noise of the `strong Type II').} Let there be no integrability directions for $\mu(\di u)$; that is, $L=\{0\}$.
   Assume also that  $\sigma(x)$ is point-wise degenerate. Then $S_T(x)=\R^m$ for all $x\in \R^m, T>0$. To prove this assertion, one can ignore the jump part in \eqref{skeleton} and consider the solution to the SDE
   $$
   \di \phi_t=\tilde b(\phi_t)\, \di t+\sigma(\phi_t)\, f_t\di t, \quad \phi_0=x.
   $$
   Since $f$ can be arbitrary piece-wise constant function $[0,T]\to \R^d$ and $\sigma(x)$ is point-wise degenerate, by a proper choice of $f$ respective solution $\phi$ can be made arbitrarily close to
   $$
   \phi_t^{x,y,T}=x+\frac{t}{T}(y-x).
   $$
  \end{example}

 Let us give two more particular  examples illustrating the range of applicability of  Theorem \ref{thm_support} and sufficient conditions from Example \ref{ex1} and Example \ref{ex2}. For that purpose, we recall the
 auxiliary `scaling' assumption imposed in \cite{Simon}, \cite{Ishikawa}: for some $\alpha\in (0,2)$,
\be\label{H2}
\int_{|u|\leq \eps}(u\cdot \ell)^2\mu(\di u)\asymp \eps^{2-\alpha}, \quad |\ell|=1.
\ee

The following  example concerns the \emph{cylindrical L\'evy processes}, which have been studied extensively in the last decades, e.g. \cite{KR17}, \cite{KR19}, \cite{PZ}.

\begin{example}\label{ex3} Let $\mu(\di u)$ be the L\'evy measure of the  $Z=( Z^1, \dots, Z^d)$ with the independent components $Z^i, i=1, \dots, d$. Let the components be symmetric $\alpha_i$-stable processes on $\R$, then the scaling assumption \eqref{H2} fails unless all the stability indices $\alpha_i, i=1, \dots, d$ are the same; that is,  \cite[Theorem~II]{Simon} is not applicable. If at least one $\alpha_i>1$, then this is the Type II L\'evy noise and \cite[Theorem~I]{Simon} is not applicable, as well. On the other hand, Theorem \ref{thm_support} can be applied  assuming the coefficients satisfy $\mathbf{H}_1$, $\mathbf{H}_2$; note that $\beta$ can be taken arbitrary $>\max_i\alpha_i.$ The L\'evy measure $\mu(\di u)$ is supported by the collection of the coordinate axes in $\R^d$ and thus satisfies the `cone condition' from Example \ref{ex1}. Hence, if   $\sigma(x)$ is point-wise degenerate, then  identity \eqref{full} holds true for any $T>0$.
\end{example}

Our second example shows that L\'evy measure can be of a `strong Type II' even if even the (rather mild) `cone condition' fails.

  \begin{example}\label{ex4} Let  the L\'evy measure $\mu(\di u)$ on $\R^2$  be the image of the symmetric $\alpha$-stable L\'evy measure $\frac{\di z}{|z|^{1+\alpha}}$ on $\R$
  under the mapping
  $$
  z\mapsto (z,|z|^\gamma\mathrm{sgn}\, z)
  $$
  with some $\gamma>1$. Then the scaling condition \eqref{H2} fails, and for $\alpha>1$ (i.e. when the noise is of Type II) the previous results are not applicable. On the other hand, Theorem \ref{thm_support} applies. Note that the L\'evy measure $\mu(\di u)$ is quite degenerate and the `cone condition' fails. On the other hand, for $\gamma\leq \alpha$ the noise has strong Type II.  Hence, if $\sigma(x)$ is point-wise degenerate, then identity \eqref{full} holds true for any $T>0$.
\end{example}

\section{Proof of Theorem \ref{thm_support}}\label{s3}

To prove the required statement, it is sufficient to prove the following two inclusions:
\be\label{inclusions} \supp\Big(\mathrm{Law}_{x_0,T} (X)\Big)\subset\overline{\mathbf{S}^{\mathrm{const}}_{0,T,x_0}}, \quad \overline{\mathbf{S}^{\mathrm{step}}_{0,T,x_0}}\subset\supp\Big(\mathrm{Law}_{x_0,T}(X)\Big).
\ee
The proof of the first inclusion is simple and standard, corresponding argument was explained in \cite{Simon}; for the reader's convenience we outline the argument here. Consider a family of SDEs
\be\label{SDE_eta}
\di X_t^\eta=b_\eta(X_t^\eta)\, \di t+\int_{\eta\leq |u|\leq 1}c(X_{t-}^\eta,u) \wt N(\di u,\di t)+ \int_{|u|>1} c(X_{t-}^\eta,u) N(\di u,\di t), \ee
with $X_0^\eta=x_0$, where
$$
b_\eta(x)=b(x)-\int_{|u|<\eta}\sigma(x)u_L \mu(\di u)-\int_{|u|<\eta}r(x,u)\mu(\di u).
$$
It is easy to check that $b_\eta\to b, \eta\to 0$ uniformly on compact subsets of $\R^m$. Then by the usual stochastic calculus technique one can show that
\be\label{eta_conv}
\sup_{t\in [0,T]}|X_t^\eta-X_t|\to 0, \quad \eta\to 0
\ee
in probability; see the proof of a similar statement in Lemma \ref{l3} below.  On the other hand, equation \eqref{SDE_eta} can be written in the form
$$
\di X_t^\eta=\wt b(X_t^\eta)\, \di t+\sigma(X_t^\eta)\upsilon_\eta\, \di t+\int_{|u|\geq \eta}c(X_{t-}^\eta,u) N(\di u,\di t),
$$
where
\be\label{shift}
\upsilon_\eta=\int_{\eta\leq |u|\leq 1}(u-u_L)\, \mu(\di u)\in L^\perp.
\ee
The PPM $N$, restricted to $\{|u|\geq \eta, t\in [0, T]\}$, has a finite set of atoms (`jumps'). Then the solution to the latter equation can be represented path-wise as a collection of solutions to ODEs of the form \eqref{skeleton}, where  $f\equiv \upsilon_\eta$ belongs to
$\mathbf{F}^{\mathrm{const}}_{0,T}$, $\{t_k\}$ are equal to the time instants of the jumps, and  $\triangle_{t_k}\phi=c(\phi_{t_{k}-}, u_k)$, where  $\{u_k\}$ are equal to the amplitudes of the jumps for $N$. Note that, for any $t_k$, the value  $\phi_{t_{k}}$ is an image of $x=\phi_{t_{k-}}, u=u_k$ under the mapping
$$
(x,u)\mapsto x+c(x,u),
$$
and thus the pair $
(\phi_{t_{k}-}, \phi_{t_{k}})$ is admissible. Therefore, for any $\eta$,
$$
\supp\Big(\mathrm{Law}_{x_0,T} (X^\eta)\Big)\subset\overline{\mathbf{S}^{\mathrm{const}}_{0,T,x_0}}\Longleftrightarrow\P(X^\eta|_{[0,T]}\in \overline{\mathbf{S}^{\mathrm{const}}_{0,T,x_0}})=1.
$$
By \eqref{eta_conv}, the laws of $X^\eta$ in $\D([0,T], \R^m)$ weakly converge to the law of $X$, which gives for a closed set  $\overline{\mathbf{S}^{\mathrm{const}}_{0,T,x_0}}$
$$
\P(X|_{[0,T]}\in \overline{\mathbf{S}^{\mathrm{const}}_{0,T,x_0}})\geq \limsup_{\eta\to 0}\P(X^\eta|_{[0,T]}\in \overline{\mathbf{S}^{\mathrm{const}}_{0,T,x_0}})=1,
$$
and completes the first inclusion in \eqref{inclusions}.

The second inclusion is the main part of the theorem.
In order to proceed with its proof, we re-write the original SDE \eqref{SDE} to the form
\be\label{SDEa}\begin{aligned}\di X_t=\wt b_\eta (X_t)\, \di t&+\sigma(X_t)\upsilon_\eta\, \di t+\int_{|u|<\eta}c(X_{t-},u) \wt N(\di u,\di t)
\\&+ \int_{|u|\geq \eta}c(X_{t-},u) N(\di u,\di t),
\end{aligned}
\ee
where $\eta>0$ is a (small) parameter which is yet to be chosen and
$$
\wt b_\eta(x)=b(x)-\int_{\eta\leq |u|\leq 1}\sigma(x)u_L \mu(\di u)-\int_{\eta\leq |u|\leq 1}r(x,u)\mu(\di u),
$$
recall that $\upsilon_\eta$ is given by \eqref{shift}. Consider SDE
\be\label{SDEb}\di X_t^{\eta, \mathrm{trunc}}=\wt b_\eta (X_t^{\eta, \mathrm{trunc}})\, \di t+\sigma(X_t^{\eta, \mathrm{trunc}})\upsilon_\eta\, \di t+\int_{|u|<\eta}c(X_{t-}^{\eta, \mathrm{trunc}},u) \wt N(\di u,\di t),
\ee
which can be seen as a modification of  \eqref{SDEa} with `large jumps' being truncated, i.e. with the PPM $N(\di u,  \di t)$ changed to its restriction to $\{|u|<\eta\}\times [0,T]$. We will denote by $X^{x, S,\eta, \mathrm{trunc}}$ the solution of \eqref{SDEb} with $X_S=x$.

Let $f\in  \mathbf{F}^{\mathrm{step}}_{0,T}, x\in \R^m, S\in [0, T]$ be fixed and $\phi^{x, S, f}$ be the solution of the ODE
\be\label{ODE}
\phi_t=x+\int_S^t\Big(\widetilde b(\phi_s)+\sigma(\phi_s)f_s\Big)\, \di s, \quad t\in [S, T]
\ee
The following statement is the cornerstone of the entire proof. In what follows  we denote by $B(x, r)$ the open ball in $\R^m$ with the center $x$ and radius $r$.
\begin{lemma}
  \label{l-main}(The Key Lemma)  There exists $\rho\in (0,1)$ such that, for any given $f\in  \mathbf{F}^{\mathrm{step}}_{0,T}, x\in \R^m,$ and $\gamma>0,$ there exists $\eta^{f,x, \gamma}>0$ such that
$$\ba
p^{\mathrm{trunc}}(\eta, f, x, \gamma)
:=
\inf_{x'\in B(x,\rho\gamma), 0\leq S\leq Q\leq T}\Pp\left(\sup_{t\in [S, Q]}|X_t^{x',S,\eta, \mathrm{trunc}}-\phi_t^{x,S, f}|\leq \gamma\right)>0
\ea$$
for any $\eta\in (0,\eta^{f,x, \gamma}].$
\end{lemma}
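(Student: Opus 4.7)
The plan is a Girsanov change of measure applied to the small-jump part of $N$: we choose a nonnegative tilt $\psi^\eta(u,t)$ on $\{|u|<\eta\}\times[0,T]$, piecewise constant in $t$ on the breakpoint partition of $f$, so that under the tilted measure $\Q^\eta$ the drift of \eqref{SDEb} becomes close to that of the ODE \eqref{ODE}. A Gronwall and Doob argument then gives closeness of $X^{\eta,\mathrm{trunc}}$ to $\phi^{x,S,f}$ with $\Q^\eta$-probability at least $1/2$, uniformly in the required parameters, and the equivalence $\Q^\eta\sim\P$ together with a finite second moment of the density transfers this to a strictly positive $\P$-probability via Cauchy--Schwarz.

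\textbf{Construction of the tilt.} Fix $\rho\in(0,1)$ with $\rho\,e^{L_\star T}\le 1/3$, where $L_\star$ is a joint Lipschitz constant for $\wt b$ and $\sigma$; this ensures that the initial displacement $|x'-x|\le\rho\gamma$ contributes at most $\gamma/3$ to the Gronwall bound. On each piece of $f$ with constant value $f_k\in L^\perp$, we seek $\psi^\eta_k:\{|u|<\eta\}\to[\eps,\infty)$ (for some small $\eps>0$) satisfying
\begin{equation*}
\int_{|u|<\eta} u\,(\psi^\eta_k(u)-1)\,\mu(\di u) \;=\; f_k-\upsilon_\eta-\int_{|u|<\eta}u_L\,\mu(\di u),
\end{equation*}
and $\int_{|u|<\eta}(\psi^\eta_k-1)^2\,\mu(\di u)<\infty$. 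The very definition of $L^\perp$ — non-integrability of $\int_{|u|\le 1}|u_{L^\perp}|\,\mu(\di u)$ — provides enough small-jump mass in the relevant direction to make this equation solvable, e.g.\ by concentrating $\psi^\eta_k-1$ in a narrow cone aligned with $f_k-\upsilon_\eta$ and distributing its magnitude over $|u|$-scales so as to match the prescribed first moment while controlling the second moment. Setting $\psi^\eta(u,t):=\psi^\eta_k(u)$ on the $k$-th piece,
\begin{equation*}
\mathcal{E}^\eta_t=\exp\Big(\int_0^t\int_{|u|<\eta}\log\psi^\eta(u,s)\,N(\di u,\di s)-\int_0^t\int_{|u|<\eta}(\psi^\eta(u,s)-1)\,\mu(\di u)\,\di s\Big)
\end{equation*}
is a true positive $\P$-martingale with $\E_\P[(\mathcal{E}^\eta_T)^2]<\infty$, and $\di\Q^\eta=\mathcal{E}^\eta_T\,\di\P$ makes the compensator of $N$ on $\{|u|<\eta\}$ equal to $\psi^\eta\mu(\di u)\di t$.

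\textbf{Closeness and transfer to $\P$.} Using $\wt b_\eta-\wt b=\int_{|u|<\eta}[\sigma u_L+r]\mu$ together with the defining property of $\psi^\eta$, a direct computation rewrites \eqref{SDEb} under $\Q^\eta$ as
\begin{equation*}
\di X^{\eta,\mathrm{trunc}}_t=\big[\wt b(X_t)+\sigma(X_t)f_t+\calR^\eta(X_t,t)\big]\di t+\di M^\eta_t,
\end{equation*}
where $\calR^\eta(x,t)=\int_{|u|<\eta} r(x,u)\psi^\eta(u,t)\mu(\di u)\to 0$ uniformly on compacts thanks to $\mathbf{H}_2$ and \eqref{moment_mu}, and $M^\eta$ is a pure-jump $\Q^\eta$-martingale whose quadratic variation is controlled by $\int_{|u|<\eta}|c(x,u)|^2\psi^\eta\mu\to 0$ (using $\beta>1$). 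Comparing with \eqref{ODE}, Gronwall plus Doob's maximal inequality give, for $\eta\le\eta^{f,x,\gamma}$ small enough and uniformly in $0\le S\le Q\le T$ and $x'\in B(x,\rho\gamma)$,
\begin{equation*}
\Q^\eta\!\Big(\sup_{t\in[S,Q]}|X^{x',S,\eta,\mathrm{trunc}}_t-\phi^{x,S,f}_t|\le\gamma\Big)\ge\tfrac12.
\end{equation*}
Cauchy--Schwarz under $\Q^\eta$ then gives $\P(A)\ge\Q^\eta(A)^2/\E_\P[(\mathcal{E}^\eta_T)^2]$ for any event $A$, which combined with the bound above yields the required strictly positive uniform lower bound on $p^{\mathrm{trunc}}(\eta,f,x,\gamma)$.

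\textbf{Main obstacle.} The technical heart of the argument is the construction of $\psi^\eta_k$: one must produce a strictly positive density realising a prescribed shift $f_k-\upsilon_\eta$ in $L^\perp$ — a shift which for Type II noise may diverge as $\eta\to 0$, since $|\upsilon_\eta|$ can blow up — while keeping $\int(\psi^\eta_k-1)^2\mu<\infty$ so that $\mathcal{E}^\eta$ remains a true $L^2$-martingale. It is precisely here that the definition of $L^\perp$ as the non-integrability subspace for $\mu$ enters in an essential way, and carrying out this construction in full generality, with no auxiliary scaling hypothesis such as \eqref{H2}, is what frees the present approach from the restrictions of \cite{Simon,Ishikawa}.
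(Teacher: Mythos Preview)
Your overall architecture --- Girsanov on the small jumps to force the drift of \eqref{SDEb} towards that of \eqref{ODE}, then transfer from the tilted to the original measure --- is exactly the paper's route. The difference is that you leave the decisive step undone: you flag the construction of the tilt $\psi^\eta_k$ as the ``Main obstacle'' and offer only a heuristic (``concentrating $\psi^\eta_k-1$ in a narrow cone aligned with $f_k-\upsilon_\eta$''), without proving that such a $\psi^\eta_k$ exists. This heuristic is in fact unreliable: for degenerate L\'evy measures such as the cylindrical ones in Example~\ref{ex3}, $\mu$ is supported on the coordinate axes, and a narrow cone around a generic direction in $L^\perp$ may carry \emph{no} $\mu$-mass at all, so your proposed construction need not produce the prescribed first moment. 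This is a genuine gap, not a technicality.

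The paper closes this gap with Lemma~\ref{l1}: for any target $w\in L^\perp$ and any $\eta>0$ there exist $\zeta\in(0,\eta)$ and a \emph{bounded} function $g:\R^d\to[-\tfrac12,\tfrac12]$, supported in the annulus $\{\zeta<|u|<\eta\}$, with $\int (u-u_L)\,g(u)\,\mu(\di u)=w$. The proof is a short convexity/separation argument: if the convex symmetric set of achievable targets were proper in $L^\perp$, a separating functional $\ell\in L^\perp\setminus\{0\}$ would satisfy $\int_{|u|<\eta}|u\cdot\ell|\,\mu(\di u)<\infty$, contradicting $\ell\notin L$. No directional or cone structure on $\mu$ is assumed. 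With $\psi=1+g\in[\tfrac12,\tfrac32]$ supported on a set of finite $\mu$-measure, your $L^2$ requirement $\int(\psi-1)^2\mu<\infty$ holds automatically, and more: $\log\psi$ is bounded, so the density $\mathcal{E}^{f,\eta}$ is controlled without any second-moment computation. The paper then transfers via the elementary overlap bound (if $\Q(A)\ge\tfrac23$ and $\Q(\mathcal{E}\ge 1/N)\ge\tfrac23$ then $\P(A)\ge\tfrac{1}{3N}$), which is slightly simpler than your Cauchy--Schwarz route but equivalent in effect once boundedness of $g$ is available.

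In short: your scaffolding is correct, but Lemma~\ref{l1} is precisely the missing idea you advertise, and your cone heuristic does not substitute for it.
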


We postpone the proof of  Lemma \ref{l-main} to a separate Section \ref{s4}; here we explain the   argument (quite standard, though a bit cumbersome) which provides the second inclusion in \eqref{inclusions} once this key lemma  is proved.

Fix $\eta>0$ and decompose
$$
N(\di u, \di t)=1_{|u|<\eta}N(\di u, \di t)+1_{|u|\geq \eta}N(\di u, \di t)=:N_\eta(\di u, \di t)+N^\eta(\di u, \di t).
$$
The PPM $N^\eta(\di u, \di t)$ a.s. has a finite number of atoms with $t\in[0, T]$; say, $\{(\xi_j,\tau_j)\}_{j=1}^J.$
It is well known that the PPMs $N_\eta(\di u, \di t)$ and $N^\eta(\di u, \di t)$ are independent. In addition,  the random variable
$$
J=N(\{|u|\geq \eta\}\times [0,T])
$$
has the Poisson distribution with the intensity $T\mu(|u|\geq \eta),$ and conditioned by the event $\{J=K\},$ the laws of the random vectors $\{\xi_j\}_{j=1}^K, \{\tau_j\}_{j=1}^K$ are independent. These laws are  the $K$-fold product of
$$
\P^\eta(\di u):=\frac{1}{\mu(\{v:|v|\geq \eta\})}\mu(\di u)
$$
for $\{\xi_j\}_{j=1}^K,$ and the uniform distribution on  the simplex
$$
\Delta_K(0, T):=\{(s_1, \dots, s_K): 0\leq s_1\leq \dots \leq s_K\leq T\}
$$
for $\{\tau_j\}_{j=1}^K.$ With this information in mind, it is easy to show that $X$ has a positive probability to stay in any neighbourhood of a given function $\phi\in\mathbf{S}^{\mathrm{step}}_{0,T,x_0}$.

Namely, let $\phi\in\mathbf{S}^{\mathrm{step}}_{0,T,x_0}$ be fixed with the  corresponding  function $f\in \mathbf{F}^{\mathrm{step}}_{0,T}$ and points $\{t_k, k=1, \dots, K\}$ from the equation   \eqref{skeleton} for $\phi$. Denote $x_k=\phi_{t_k-}, y_k=\phi_{t_k}$; we can and will assume that $x_k\not=y_k$ because otherwise  we can simply exclude the point $t_k$ from the equation \eqref{skeleton}. Denote also $t_0=0, t_{K+1}=T$, then for any positive
$$
\delta<\delta_{\{t_k\}}:=\frac12\min_{k=0, \dots, K}|t_{k+1}-t_k|
$$
we have
$$
p(\eta, \delta, \{t_k\}):=\P(J=K, |t_k-\tau_k|<\delta, k=1, \dots,K)=e^{-T\mu(|u|\geq \eta)}\Big(2\delta\mu(|u|\geq \eta)\Big)^K,$$
which is positive.

Process $X$ follows the truncated SDE \eqref{SDEb} between the `big jumps instants' $\tau_j$, and at these instants satisfies
$$
\triangle_{\tau_j} X=c(X_{\tau_j}, \xi_{j}), \quad j=1, \dots, J.
$$
Denote by $X^{s_1, \dots, s_K}$ the similar process with $J=K$ and $\tau_k=s_k$: it
follows the truncated SDE \eqref{SDEb} between  $s_k$, and at these time instants satisfies
$$
\triangle_{s_k} X=c(X_{s_k}, \xi_{k}), \quad k=1, \dots, K;
$$
the random vector $\{\xi_j\}_{j=1}^K$ has the law $(\P^\eta)^{\otimes K}$.
Then
\be\label{sk}\ba
\P(X\in A)&\geq \P(X\in A, J=K)
\\&=\int_{\Delta_K(0, T)}\P(X^{s_1, \dots, s_K}\in A)\P(J=K, (\tau_1, \dots\tau_k)\in ds_1\dots ds_K)
\\&\geq p(\eta, \delta, \{t_k\})\inf_{|t_k-s_k|<\delta, k=1, \dots,K}\P(X^{s_1, \dots, s_K}\in A).
\ea\ee
We will apply this inequality with
$$
A=\{x(\cdot): d(x(\cdot), \phi)\leq  \eps \},
$$
in this case $\P(X^{s_1, \dots, s_K}\in A)$ can be bounded from below  as follows. Denote $s_0=0$, $s_{K+1}=T$, and $y_0=x_0$
Define a function
$\phi^{s_1, \dots, s_K}$ as follows: on each of the intervals $[s_k, s_{k+1}), k=0, \dots, K$ it satisfies \eqref{ODE} with $x=y_k, S=s_k$; it is also continuous at the end point $T=s_{K+1}$.
Recall that the initial function $\phi_t$ follows similar description with $\{t_k\}$ instead of $\{s_k\}$. Using this observation, it is easy to show that
\be\label{phi_delta}
\sup_{{|t_k-s_k|<\delta, k=1, \dots,K}} d(\phi^{s_1, \dots, s_K}, \phi)\to 0, \quad \delta\to 0;
\ee
for the reader's convenience we prove this relation in Appendix \ref{sA}. Hence we can fix $\delta(\eps)>0$ such that
$$
\sup_{{|t_k-s_k|<\delta, k=1, \dots,K}} d(\phi^{s_1, \dots, s_K}, \phi)\leq \frac\eps2, \quad \delta\in (0, \delta(\eps)),
$$
and thus for any  $\{s_k\}$ with $|t_k-s_k|<\delta, k=1, \dots,K$
\be\label{eps/2}
\P(d(X^{s_1, \dots, s_K}, \phi)\leq \eps)\geq \P\left(d(X^{s_1, \dots, s_K}, \phi^{s_1, \dots, s_K})\leq \frac\eps2\right).
\ee
Next, denote
$$
\mathcal{F}_t=\sigma(X^{s_1, \dots, s_K}_s, s\leq t)=\sigma(N_\eta([0,s]\times du), s\leq t, \{\xi_k, s_k\leq t\}),
$$
and
$$
\mathcal{G}_t=\mathcal{F}_{t-}=\sigma(N_\eta([0,s]\times du), s\leq t, \{\xi_k, s_k<t\}),
$$
note that $\xi_k$ is independent on $\mathcal{G}_{s_k}$ for any $k$.
The following holds:
 \begin{itemize}
   \item[(I)] For any $k=0, \dots, K$ and $\gamma>0$, the conditional probability w.r.t. $\mathcal{F}_{s_k}$ for the event
   $$
   \{|X^{s_1, \dots, s_K}_t-\phi_t^{s_1, \dots, s_K}|\leq  \gamma, t\in [s_k, s_{k+1})\}
   $$
   equals
   $$
   \P\left(\sup_{t\in [s_k, s_{k+1})}|X_t^{x, s_k,\eta, \mathrm{trunc}}-\phi_t^{y_k,s_k, f}|\leq \gamma\right)\Big|_{x=X_{s_k}^{s_1, \dots, s_K}},
   $$
   and
   by Lemma \ref{l-main} is bounded from below by $p^{\mathrm{trunc}}(\eta, f,  y_k, \gamma)$ on the set
   $$
   \left\{|X^{s_1, \dots, s_K}_{s_k}-y_k|<\frac\gamma2\right\}\in \mathcal{F}_{s_k},
   $$
   provided that $\eta\leq\eta^{f,y_k,\gamma}$;
   \item[(II)] for any $k=1, \dots, K$ and $\gamma>0$,  the conditional probability w.r.t. $\mathcal{G}_{s_k}$ for the event
   $$
   \left\{|X^{s_1, \dots, s_K}_{s_k}-y_k|<\frac\gamma2\right\}
   $$
   equals
   $$\ba
   P^\eta&\left(|x+c(x,\xi)-y_k|<\frac\gamma2\right)\Big|_{x=X^{s_1, \dots, s_K}_{s_k-}}
   \\&=\frac{1}{\mu(\{u:|v|\geq \eta\})}\mu\left(\left\{u:|u|\geq \eta, |x+c(x,u)-y_k|<\frac\gamma2\right\}\right)\Big|_{x=X^{s_1, \dots, s_K}_{s_k-}}.
   \ea$$
   \end{itemize}

Recall that each pair $(x_k, y_k)=(\phi_{t_k-}, \phi_{t_k})$  is admissible, hence for any $\gamma>0$ and $k$,
$$
J(x_k,B(y_k, \gamma))=\mu(\{u:x_k+c(x_k,u)\in B(y_k, \gamma)\})>0.
$$
Take
$$
\gamma_*=\frac13\min_{k}|x_k-y_k|,
$$
then by  the assumptions $\mathbf{H}_1$, $\mathbf{H}_2$ there exists $\eta^{*}>0$ such that, for all $k$,
\be\label{trunc}
|u|\leq \eta^*\Longrightarrow |c(x_k, u)|< |x_k-y_k|-{\gamma_*} \Longrightarrow x_k+c(x_k, u)\not \in B\left(y_k, {\gamma_*}\right).
\ee
This yields for any $\gamma\in (0, \gamma_*]$
$$
\mu(\{u:|u|>\eta^*, x_k+c(x_k, u) \in B(y_k, \gamma)\})>0, \quad k=1, \dots, K.
$$
Moreover, because $c(x,u)$ is continuous w.r.t. $x$ we have by the usual weak continuity arguments that, for each $\gamma\in (0, \gamma_*]$ and $k$,  there exists $\gamma'>0$ such that
\be\label{trunc_prim}
\inf_{x\in B(x_k,2\gamma')}
\mu\left(\left\{u:|u|>\eta^*, x+c(x, u) \in B\left(y_k, \gamma\right)\right\}\right)>0.
\ee
Now we can finalize the construction and the estimate. Define iteratively $\gamma_{k}, \gamma_k'$ for $k=K, \dots, 1$ as follows:
Take
$$
\gamma_{K}=\min\left(\frac{\rho\eps}2,\gamma_*\right)
$$
(with $\rho$ given by Lemma \ref{l-main}) and define $\gamma'_K$ such that
\eqref{trunc_prim} holds true with $\gamma=\gamma_K$. Once $\gamma_{k+1}, \gamma_{k+1}'$ are defined, take
$$
\gamma_{k}=\min\left(\rho\gamma_{k+1}',\gamma_{k+1}\right)
$$
and define $\gamma'_k$ such that
\eqref{trunc_prim} holds true with $\gamma=\gamma_{k+1}$.

It follows from the calculations in Appendix \ref{sA} that $\delta\in (0, \delta(\eps))$ can be taken small enough such that, for any  $s_1, \dots, s_K$ with $|t_k-s_k|<\delta, k=1, \dots, K$,
\be\label{gamma}
|\phi_{s_k-}^{s_1, \dots, s_K}-x_k|<\gamma_{k}'.
\ee
We fix such $\delta>0$ and the truncation level
$$
\eta=\min\{\eta_*, \eta^{f,y_k,\gamma_{k+1}'}, k=0, \dots, K \}.
$$
Denote
$$
A_k=\left\{\sup_{t\in [s_k, s_{k+1})}|X^{s_1, \dots, s_K}_t-\phi_t^{s_1, \dots, s_K}|\leq \gamma_{k+1}'\right\}, \quad k=0, \dots, K-1,
$$
$$A_K=\left\{\sup_{t\in [s_K, T]}|X^{s_1, \dots, s_K}_t-\phi_t^{s_1, \dots, s_K}|\leq \frac\eps2\right\},
$$
and
$$
B_k=\{|X^{s_1, \dots, s_K}_{s_k}-y_k|<\gamma_k\}, \quad k=1, \dots, K.
$$
Then $ A_k\in \mathcal{G}_{s_{k+1}-}, B_k\in \mathcal{F}_{s_k}, k=1,\dots, K$, and we have the following:

\begin{itemize}
  \item Since $\gamma_k\leq \rho\gamma_k'$ for any $k=1, \dots, K-1$, by Lemma \ref{l-main}  we have
  $$
  \P(A_k|\mathcal{F}_{s_{k}})\geq  p^{\mathrm{trunc}}(\eta, f,  y_k, \gamma_{k+1}') \hbox{ a.s. on the set }B_k.
  $$
  We also have by Lemma \ref{l-main}
  $$
   \P(A_1)\geq  p^{\mathrm{trunc}}(\eta, f,  y_0, \gamma_{1}')
   $$
   and
   $$
  \P(A_K|\mathcal{F}_{s_{K}})\geq  p^{\mathrm{trunc}}\left(\eta, f,  y_K, \frac\eps2\right) \hbox{ a.s. on the set }B_K.
  $$
  \item By \eqref{gamma},  for any $k=1, \dots, K$ we have  $|X^{s_1, \dots, s_K}_{s_k-}-x_k|<2\gamma_k'$ on the set $A_{k-1}$. Then
  $$\ba
   \P(B_k|\mathcal{G}_{s_{k}})&\geq  p^{\mathrm{jump}}_k(\eta)
   \\&:=
\frac{1}{\mu(\{u:|v|\geq \eta\})}\inf_{x\in B(x_k,2\gamma'_k)}
\mu\left(\left\{u:|u|>\eta, x+c(x, u) \in B\left(y_k, \gamma_k\right)\right\}\right),
  \ea$$
  and since $\eta\leq \eta_k, k=1, \dots, K$ we have by \eqref{trunc_prim}
  $$
  p^{\mathrm{jump}}_k(\eta)>0, \quad k=1, \dots, K.
      $$
\end{itemize}

By the construction, we have  $\gamma_k, \gamma_k'\leq \frac\eps2$ for all $k=1, \dots, K$. Then by the telescopic property of the conditional expectations we have for any $s_1, \dots, s_K$ with $|t_k-s_k|<\delta, k=1, \dots, K$
$$\ba
\P&\left(\sup_{t\in [0, T]}|X^{s_1, \dots, s_K}_{t}-\phi^{s_1, \dots, s_K}_{t}|\leq \frac\eps2\right)\geq \P(A_0\cap B_1\cap A_1\dots\cap B_K\cap B_K)
\\&\hspace*{1cm}\geq  p^{\mathrm{trunc}}\left(\eta, f,  y_K, \frac\eps2\right)\prod_{k=1}^{K}\Big(p^{\mathrm{trunc}}(\eta, f,  y_{k-1}, \gamma_{k}') p^{\mathrm{jump}}_k(\eta)\Big)=:q(\eta, \delta)>0.
\ea
$$
Then by \eqref{eps/2}
$$\ba
\P(d(X^{s_1, \dots, s_K}, \phi)\leq \eps)&\geq \P\left(d(X^{s_1, \dots, s_K}, \phi^{s_1, \dots, s_K})\leq \frac\eps2\right)
\\&\geq \P\left(\sup_{t\in [0, T]}|X^{s_1, \dots, s_K}_{t}-\phi^{s_1, \dots, s_K}_{t}|\leq \frac\eps2\right)
\\&\geq q(\eta, \delta)
\ea$$
and by
\eqref{sk}
$$
\P(d(X, \phi)\leq \eps)\geq  p(\eta, \delta, \{t_k\}) q(\eta, \delta).
$$
In these estimates, the choice of $\delta, \eta$ depends on $\phi\in \mathbf{S}^{\mathrm{step}}_{0,T,x_0}$ and $\eps>0$, only, hence the proof of
\eqref{inclusions} is complete.

\section{Proof of the Key Lemma}\label{s4} We begin the proof of Lemma \ref{l-main} with the following auxiliary result.

\begin{lemma}\label{l1}
  For any $w\in L^\perp$ and $\eta>0$ there exist $\zeta\in (0, \eta)$ and a function $g:\R^d\to [-\frac12, \frac{1}{2}]$ such that
$
g(u)=0\hbox{ whenever either }|u|\leq \zeta\hbox{ or }|u|\geq \eta
$
and
$$
\int_{\R^d}(u-u_L)g(u)\, \mu(\di u)=w.
$$
\end{lemma}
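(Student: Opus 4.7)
I plan to interpret the conclusion of the lemma as a surjectivity statement. Introduce
$$
\calU:=\bigl\{g:\R^d\to[-\tfrac{1}{2},\tfrac{1}{2}]\ \big|\ \exists\,\zeta\in(0,\eta)\text{ with }g\equiv 0\text{ outside }\{\zeta\leq|u|\leq\eta\}\bigr\}
$$
and the linear map $F:g\mapsto\int_{\R^d}(u-u_L)g(u)\,\mu(\di u)$; the lemma is then equivalent to the equality $F(\calU)=L^\perp$. The inclusion $F(\calU)\subset L^\perp$ is automatic: for any $\ell\in L$ one has $(u-u_L)\cdot\ell=0$ pointwise (since $u_L$ is the orthogonal projection of $u$ on $L$), so $F(g)\cdot\ell=0$ for every $g\in\calU$. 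All integrals are absolutely convergent for $g\in\calU$, as $g$ is bounded and $\mu$ is finite on $\{\zeta\leq|u|\leq\eta\}$.

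The core of the argument is the direction-by-direction unboundedness
$$
\sup_{g\in\calU}F(g)\cdot\ell=+\infty\quad\text{for every }\ell\in L^\perp\setminus\{0\}.
$$
Indeed, $\ell\in L^\perp\setminus\{0\}$ forces $\ell\notin L$ (since $L\cap L^\perp=\{0\}$), so by the very definition of $L$ one has $\int_{|u|\leq 1}|u\cdot\ell|\,\mu(\di u)=+\infty$; because $\mu$ is a L\'evy measure it has finite mass on $\{|u|\geq\zeta_0\}$ for every $\zeta_0>0$, so the divergence accumulates near the origin and $\int_{\zeta\leq|u|\leq\eta}|u\cdot\ell|\,\mu(\di u)\to+\infty$ as $\zeta\to 0^+$. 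Taking the admissible function $g_\zeta(u):=\tfrac{1}{2}\mathrm{sgn}(\ell\cdot u)\mathbf{1}_{\{\zeta\leq|u|\leq\eta\}}\in\calU$ realizes this divergence: $F(g_\zeta)\cdot\ell=\tfrac{1}{2}\int_{\zeta\leq|u|\leq\eta}|u\cdot\ell|\,\mu(\di u)\to+\infty$.

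It remains to pass from unboundedness in every direction to $F(\calU)=L^\perp$. The set $F(\calU)$ is convex (image of a convex set by a linear map), symmetric (since $\calU=-\calU$), and, by the previous step, has infinite support function in every direction of $L^\perp$; the Hahn-Banach separation theorem in the finite-dimensional space $L^\perp$ then rules out any point external to the closure, giving $\overline{F(\calU)}=L^\perp$. The main delicate point, which I expect to require the most care, is the passage from this density to honest equality. Here I will invoke two standard facts from convex analysis in a finite-dimensional space: first, a convex set whose closure is the whole ambient space has full affine hull and hence non-empty interior; second, for any convex set $C$ with non-empty interior one has $\mathrm{int}(C)=\mathrm{int}(\overline{C})$. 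Combining these gives $\mathrm{int}(F(\calU))=\mathrm{int}(\overline{F(\calU)})=L^\perp$, and hence $F(\calU)=L^\perp$, which is exactly the statement of the lemma.
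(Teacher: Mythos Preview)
Your proof is correct and follows essentially the same route as the paper: both identify the image set as convex and symmetric in $L^\perp$, use the test function $g_\zeta=\tfrac12\,\mathrm{sgn}(\ell\cdot u)\,\mathbf{1}_{\{\zeta<|u|<\eta\}}$ together with the divergence $\int_{|u|<\eta}|\ell\cdot u|\,\mu(\di u)=\infty$ for $\ell\in L^\perp\setminus\{0\}$, and conclude via a separation argument. You are in fact more explicit than the paper about the passage from $\overline{F(\calU)}=L^\perp$ to $F(\calU)=L^\perp$ (non-empty interior and $\mathrm{int}(C)=\mathrm{int}(\overline{C})$), a step the paper compresses into the single assertion that a proper symmetric convex subset of $L^\perp$ must be bounded in some direction.
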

\begin{proof} For a given $0<\zeta<\eta$, denote by $G_\zeta^\eta$ the set of all functions $g:\R^d\to [-\frac12, \frac{1}{2}]$ such that
$
g(u)=0\hbox{ whenever either  }|u|\leq \zeta\hbox{ or }|u|\geq \eta.
$ This set is convex and symmetric; hence
$$
V_\zeta^\eta=\left\{\int_{\R^d}(u-u_L)g(u)\, \mu(\di u), g\in G_\zeta^\eta\right\}
$$
is a symmetric convex subset of $L^\perp$, and so is the set
$$
V_0^\eta:=\bigcup_{\zeta\in (0, \eta)}V_\zeta^\eta
$$
The statement of the lemma is equivalent to
\be\label{id}
V_0^\eta= L^\perp.
\ee
Assuming \eqref{id} to fail, we have that $V_0^\eta$ is a proper symmetric convex subset of $L^\perp,$ and thus there exist $\ell\in L^\perp\setminus\{0\}$ and $c>0$ such that
\be\label{ass}
-c\leq v\cdot \ell\leq c, \quad v\in V_0^\eta;
\ee
recall that  $v\cdot \ell$ denotes the scalar product in $\R^d$. It follows from \eqref{ass} that, for every $\zeta\in (0, \eta)$ and $g\in G_\zeta^\eta$,
$$\ba
\left|\int_{\R^d}\ell\cdot u\,g(u)\, \mu(\di u)\right|&=\left|\int_{\R^d}\ell\cdot(u-u_L)g(u)\, \mu(\di u)\right|
\\&=\left|\ell\cdot\int_{\R^d}(u-u_L)g(u)\, \mu(\di u)\right|\leq c,
\ea
$$
in the second identity we have used that $u_L\in L$ is orthogonal to $\ell\in L^\perp$. Taking
$$
g_\zeta^\eta(u)=\frac12\mathrm{sign}\,(\ell\cdot u)1_{\zeta<|u|<\eta},
$$
we get from the previous inequality that
$$
\int_{\zeta<|u|<\eta}|\ell\cdot u|\, \mu(\di u)\leq 2c, \quad \zeta\in (0, \eta),
$$
and passing to the limit as $\zeta\to 0$ we obtain
$$
\int_{|u|<\eta}|\ell\cdot u|\, \mu(\di u)\leq 2c<+\infty.
$$
This means that $\ell\in L$, which contradicts to the fact that $\ell\in L^\perp\setminus\{0\}$. This contradiction shows that $V_0^\eta$ should coincide with entire $L^\perp,$ which completes the proof.
\end{proof}

By Lemma \ref{l1}, for a fixed $f\in  \mathbf{F}^{\mathrm{step}}_{0,T}$ and $\eta>0$  one can choose a function $g_t^{f, \eta}(u)$ such that it is a step-wise function of $t$, takes values in $[-\frac12, \frac{1}{2}]$,  satisfies
\be\label{star}
\int_{\R^d}(u-u_L)g_t^{f, \eta}(u)\, \mu(\di u)=\upsilon_\eta-f_t, \quad t\in [0, T]
\ee
with  $\upsilon_\eta\in L^\perp$  given by \eqref{shift}, and,  for some $\zeta^{f, \eta}>0$, one has  $g_t^{f, \eta}(u)=0$ whenever either $|u|\geq \eta$ or $|u|\leq \zeta^{f, \eta}$.

 We write the compensated PPM in \eqref{SDEb} in the form $N(\di u, \di t)-\mu(\di u)\di t$ and
consider the same SDE with another PPM $Q^{f, \eta}(\di u, \di t)$ which has the intensity measure $(1+g_t^{f, \eta}(u))\mu(\di u)\di t$:
\be\label{SDEc}\di Y_t^\eta=\wt b_\eta (Y_t^\eta)\, \di t+\sigma(Y_t)\upsilon_\eta\, \di t+\int_{|u|<\eta}c(Y_{t-}^\eta,u) (Q^{f, \eta}(\di u,\di t)-\mu(\di u)\di t).
\ee
Similarly to the notation used in Lemma \ref{l-main}, we denote by $Y_t^{x,S,\eta}, t\geq S$ the solution to \eqref{SDEc} with $Y^\eta_S=x$. The following lemma is based on quite standard stochastic calculus estimates.
\begin{lemma}\label{l3} There exists $\rho\in (0,1)$ such that, for arbitrary $\gamma\in (0,1], x\in \R^d$,
$$
\inf_{x'\in B(x, \gamma\rho), 0\leq S\leq Q\leq T}\Pp\left(\sup_{t\in [S, Q]}|Y_t^{x',S,\eta}-\phi_t^{x,S, f}|\leq \gamma\right)\to 1, \quad \eta\to 0.
$$
\end{lemma}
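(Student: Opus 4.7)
The plan is a stochastic Gronwall argument: rewrite $Y^\eta$ so that its effective drift equals $\tilde b+\sigma f$ up to a residual that vanishes as $\eta\to 0$, bound the martingale part by standard $L^2$ tools, and close by Lipschitz continuity of $\tilde b$ and $\sigma$. The entire point of the identity \eqref{star} defining $g^{f,\eta}$ is to make this cancellation work.

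First I would decompose
$$
Q^{f,\eta}(\di u,\di t)-\mu(\di u)\di t=\bigl[Q^{f,\eta}(\di u,\di t)-(1+g_t^{f,\eta}(u))\mu(\di u)\di t\bigr]+g_t^{f,\eta}(u)\mu(\di u)\di t,
$$
so that the bracket defines a square-integrable martingale measure $\di M^\eta_t$ while the remainder contributes the extra drift $\int_{|u|<\eta}c(Y^\eta_t,u)g^{f,\eta}_t(u)\mu(\di u)\di t$ to \eqref{SDEc}. Substituting $c(y,u)=\sigma(y)u+r(y,u)$, splitting $u=u_L+(u-u_L)$, invoking \eqref{star} on the $(u-u_L)$ part, and using $\tilde b_\eta-\tilde b=\int_{|u|<\eta}(\sigma u_L+r)\mu$, equation \eqref{SDEc} takes the form
$$
\di Y^\eta_t=\bigl(\tilde b(Y^\eta_t)+\sigma(Y^\eta_t)f_t+R^\eta(Y^\eta_t,t)\bigr)\di t+\di M^\eta_t,
$$
where the deterministic residual obeys $|R^\eta(y,t)|\leq C(1+|y|)\varrho(\eta)$ with
$$
\varrho(\eta):=\int_{|u|<\eta}\bigl(|u_L|+|u|^\beta\bigr)\mu(\di u)\underset{\eta\to 0}{\longrightarrow}0,
$$
by $\int_{|u|\leq 1}|u_L|\mu(\di u)<\infty$, \eqref{moment_mu} and $|g^{f,\eta}|\leq\tfrac12$. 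Using $|c(y,u)|\leq C(1+|y|)|u|$ for $|u|\leq 1$ (a consequence of $\mathbf{H}_1$, $\mathbf{H}_2$ with $\beta>1$), the predictable quadratic variation of $M^\eta$ is dominated by $C\int_0^T(1+|Y^\eta_s|)^2\di s\cdot\int_{|u|<\eta}|u|^2\mu(\di u)$, so standard moment bounds for $Y^\eta$ combined with Doob's $L^2$ inequality give $\E\sup_{t\leq T}|M^\eta_t|^2\to 0$ as $\eta\to 0$, uniformly in $S$ and in $x'$ from any bounded set.

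Finally I would run Gronwall on $\Delta_t:=Y^{x',S,\eta}_t-\phi^{x,S,f}_t$. With $K:=\mathrm{Lip}(\tilde b)+\mathrm{Lip}(\sigma)\sup_t|f_t|$ (finite, as $f$ is a bounded step function), Lipschitz continuity of the drifts yields
$$
|\Delta_t|\leq|x'-x|+K\int_S^t|\Delta_s|\di s+\sup_{s\in[S,t]}|M^\eta_s|+CT\varrho(\eta)\bigl(1+\sup_{s\leq T}|Y^\eta_s|\bigr),
$$
and Gronwall gives $\sup_{t\in[S,Q]}|\Delta_t|\leq e^{KT}\bigl(|x'-x|+\sup_s|M^\eta_s|+CT\varrho(\eta)(1+\sup_s|Y^\eta_s|)\bigr)$. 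Setting $\rho:=\tfrac12 e^{-KT}\in(0,1)$, for $|x'-x|\leq\gamma\rho$ the first term contributes at most $\gamma/2$; the event that the stochastic and residual terms together are bounded by $\tfrac{\gamma}{2}e^{-KT}$ has probability tending to $1$ uniformly in $(x',S,Q)$ as $\eta\to 0$, by Chebyshev/Doob on $M^\eta$ and the deterministic decay of $\varrho(\eta)$. On this event $\sup_{t\in[S,Q]}|\Delta_t|\leq\gamma$, giving the claim. The main obstacle is the drift identification: the identity \eqref{star} is engineered so that the potentially divergent term $\sigma(y)\upsilon_\eta$ (its coefficient $\upsilon_\eta$ need not stay bounded as $\eta\to 0$, since only the $L^2$-size of $(u-u_L)$ is controlled by the L\'evy measure condition) cancels exactly against $\sigma(y)\int(u-u_L)g^{f,\eta}\mu(\di u)$, leaving only the integrable residual $\varrho(\eta)$; everything downstream is routine stochastic calculus.
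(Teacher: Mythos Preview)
Your proposal is correct and follows essentially the same route as the paper: both rewrite \eqref{SDEc} by compensating $Q^{f,\eta}$ against its true intensity $(1+g^{f,\eta})\mu$, use the identity \eqref{star} to cancel the potentially divergent $\sigma\upsilon_\eta$ term and produce the drift $\tilde b+\sigma f$ plus a remainder vanishing as $\eta\to0$, bound the martingale part via Doob's inequality and the It\^o isometry, close with Gronwall, and set $\rho=\tfrac12 e^{-KT}$. The only technical difference is that the paper localizes with a stopping time $\tau_{K'}$ to a compact neighbourhood of the deterministic curve (so the coefficient bounds and the martingale's quadratic variation become deterministic), whereas you invoke uniform-in-$\eta$ second moment bounds on $Y^\eta$ to control the linear-growth factors $(1+\sup_s|Y^\eta_s|)$; those moment bounds do follow from the rewritten SDE (whose coefficients have $\eta$-uniform linear growth), but the localization argument is slightly cleaner in that it sidesteps this extra step.
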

\begin{proof} For simplicity of notation we take $S=0, Q=T$ and omit the index $S$. We also consider the scalar case $d=1$; for $d>1$ similar  estimates should be performed coordinate-wise.

 The stochastic integral part in the equation \eqref{SDEc} can be written as
$$
\int_{|u|<\eta}c(Y_{t-},u) \wt Q^{f, \eta}(\di u,\di t)-\int_{|u|<\eta}c(Y_{t-},u) g_t^{f, \eta}(u)\mu(\di u)\di t,
$$
hence taking \eqref{star} into account  we can write this equation in the form
\be\label{SDEd}\di Y_t^\eta=\wt b_t^{f, \eta} (Y_t^\eta)\, \di t+\sigma(Y_t^\eta)f_t\, \di t+\int_{|u|<\eta}c(Y_{t-}^\eta,u) \wt Q^{f, \eta}(\di u,\di t),
\ee
where
$$\ba
\wt b_t^{f, \eta}(x)&= \wt b_\eta(x)-\int_{|u|<\eta} \sigma(x)u_L g_t^{f, \eta}(u)\mu(\di u)-\int_{|u|<\eta} r(x,u) g_t^{f, \eta}(u)\mu(\di u)
\\&=\wt b(x)+\int_{|u|<\eta} \sigma(x)u_L (1-g_t^{f, \eta} (u))\mu(\di u)+\int_{|u|<\eta} r(x,u) (1-g_t^{f, \eta}(u))\mu(\di u).
\ea $$
Since the functions $u_L, |u|^\beta$ are integrable w.r.t. $\mu(du)$ on $\{|u|\leq 1\}$,
$$
|1-g_t^{f, \eta}(u)|\leq \frac32,$$
 and assumptions $\mathbf{H}_1, \mathbf{H}_2$ hold, we have that
$$
\Delta_\eta^K:=\sup_{x\in K, t\in [0, T]}|\wt b_t^{f, \eta}(x)-\wt b(x)|\to 0, \quad \eta\to 0
$$
for any compact subset $K\subset\R^d$. Denote $\phi_t=\phi^{x,f}$ and take
$$
K=\mathrm{closure}\Big(\{\phi_s,  s\in [0, T]\} \Big), \quad K'=\{y: \mathrm{dist}(y, K)\leq 1\}.
$$
 By the assumptions $\mathbf{H}_1, \mathbf{H}_2$, there exists a constant $C_{K'}$ such that
$$
|c(y,u)|\leq C_{K'}|u|, \quad |u|\leq 1, \quad y\in K'.
$$
Denote
$$
\tau_{K'}=\inf\{t: Y_t^{x',\eta}\not \in K'\}
$$
with the usual convention $\inf\varnothing=T$. We have
$$\ba
Y_t^{x',\eta}-\phi_t&=x'-x+\int_0^t\Big(\wt b_t^{f, \eta}(\phi_s)-\wt b(\phi_s)\Big)\, ds
\\&+\int_0^t\Big(\wt b_t^{f, \eta}(Y^{x',\eta}_s)-\wt b_t^{f, \eta}(\phi_s)\Big)\, ds+\int_0^t\int_{|u|<\eta}c(Y_{s-}^{x',\eta},u) \wt Q^{f, \eta}(\di u,\di s).
\ea
$$
By the Doob maximal inequality and the It\^o isometry, for any $\eps>0$
$$\ba
\P\left(\sup_{t\in[0, \tau_{K'}]}\left|\int_0^t\int_{|u|<\eta}c(Y_{s-}^\eta,u) \wt Q^{f, \eta}(\di u,\di s)\right|\geq\eps\right)&\\&\hspace*{-3cm}\leq\frac1{\eps^2}
\E\left(\int_0^{\tau_{K'}}\int_{|u|<\eta}c(Y_{s-}^\eta,u) \wt Q^{f, \eta}(\di u,\di s)\right)^2
\\&\hspace*{-3cm}=\frac1{\eps^2}
\E\int_0^{\tau_{K'}}\int_{|u|<\eta}c(Y_{s-}^\eta,u)^2 (1+g_s^{f, \eta}(u))\mu(\di u)\di s
\\&\hspace*{-3cm}\leq C_{K'}\frac{3T}2\int_{|u|<\eta}|u|^2\mu(\di u)\to 0, \qquad \eta\to 0;
\ea$$
in the last inequality we have used that $1+g_s^{f, \eta}(u)\leq \frac32$. It is easy to check that the functions $\wt b_t^{f, \eta}$ are uniformly Lipschitz, i.e. there exists $L$ such that, for any $t\in [0, T], \eta\in (0,1]$
$$
|\wt b_t^{f, \eta}(x)-\wt b_t^{f, \eta}(y)|\leq L|x-y|.
$$
Then  on the set
$$
A_{\eps, \eta}:=\left\{\sup_{t\in[0, \tau_{K'}]}\left|\int_0^t\int_{|u|<\eta}c(Y_{s-}^\eta,u) \wt Q(\di u,\di s)\right|<\eps\right\}
$$
we have
$$
|Y_t^{x',\eta}-\phi_t|\leq |x'-x|+T\Delta_\eta^K+\eps+L\int_0^t|Y^{x',\eta}_s (\phi_s)-\phi_s|\, ds, \quad t\in [0, \tau_{K'}],
$$
which by the Gronwall inequality yields
\be\label{Gronwall}
\sup_{t\in [0, \tau_{K'}]}|Y_t^{x',\eta}-\phi_t| \leq \Big(|x'-x|+T\Delta_\eta^K+\eps\Big)e^{LT}.
\ee
Take
$$
\rho=\frac12e^{LT},
$$
and $\eps, \eta$ small enough for
$$
 \Big(T\Delta_\eta^K+\eps\Big)e^{LT}<1.
 $$
 Then, for any $\gamma\in (0,1]$ and $x'$ with $|x'-x|<\rho \gamma$ we have on the set $A_{\eps, \eta}$
 $$
 \sup_{t\in [0, \tau_{K'}]}|Y_t^{x',\eta}-\phi_t|<1.
 $$
 Because trajectories of $Y^{x',\eta}$ are right continuous this yields that, on this set, $\tau_{K'}=T$ and actually
 \be\label{Gronwall_2}
\sup_{t\in [0,T]}|Y_t^{x',\eta}-\phi_t| \leq \Big(|x'-x|+T\Delta_\eta^K+\eps\Big)e^{LT}.
\ee
Now, for a given $\gamma$, we take $\eps<\frac14\gamma e^{-LT}$ and    we get that for $\eta$ small enough
$$
\inf_{x'\in B(x, \rho\gamma)}\P(\sup_{t\in [0,T]}|Y_t^{x',\eta}-\phi_t|\leq \gamma)\geq \P(A_{\eps, \eta}),
$$
and because
$$
\P(A_{\eps, \eta})\to 1, \quad \eta\to 0
$$
this completes the proof. \end{proof}

Now we are ready to complete the proof of the Key Lemma. Denote
$$\ba
\mathcal{E}^{f, \eta}=\exp\left(-\int_{\R^d\times [0, T]}\log (1+g_t^{f, \eta}(u))\wt Q^{f, \eta}(\di u, \di t)\right.&
\\&\hspace*{-3.5cm}\left.+\int_{\R^d\times [0, T]}\Big(g_t^{f, \eta}(u)-\log (1+g_t^{f, \eta}(u))\Big)\, \mu(\di u)\di t\right),
\ea$$
recall that by the construction $1+g_t^{f, \eta}(u)\in [\frac12, \frac32]$ and $1+g_t^{f, \eta}(u)=1$  whenever either $|u|\geq \eta$ or $|u|\leq \zeta^{f, \eta}$. Hence the stochastic integral under the exponent is well defined in the standard (quadratic) It\^o sense. The second (deterministic) integral is even bounded; hence it is easy to see that
\be\label{E_bound}
P\left(\mathcal{E}^{f, \eta}<\frac{1}{N}\right)=P(\log\mathcal{E}^{f, \eta}<-\log N)\to 0, \quad N\to \infty
\ee
for any $\eta\in (0,1]$.

On the other hand, the classical result by Skorokhod \cite{Skor} tells us that the laws of the PPMs $N(\di u, \di t)$ and $Q^{f, \eta}(\di u, \di t)$ are equivalent, and $\mathcal{E}^{f, \eta}$ equals to the Radon-Nikodym derivative of $\mathrm{Law}(N)$ w.r.t. $\mathrm{Law}(Q^{f, \eta})$ evaluated at $Q^{f, \eta}$. Recall that $X^{x',S,\eta, \mathrm{trunc}}$ and $Y^{x',S,\eta}$ are defined as the strong solutions to the same SDE with the noises $N$ and $Q^{f, \eta}$, respectively. Hence we can treat them as images of $N, Q^{f, \eta}$ under some  measurable mapping, which  yields the identity
$$
\Pp\left(\sup_{t\in [S, Q]}|X_t^{x',S,\eta, \mathrm{trunc}}-\phi_t^{x,S, f}|\leq \gamma\right)=\E 1_{\sup_{t\in [S, Q]}|Y_t^{x',S,\eta, }-\phi_t^{x,S, f}|<\eps}\mathcal{E}^{f, \eta}.
$$

By Lemma \ref{l3}, for a given $x\in \R^d$, $\gamma>0$, and $f\in  \mathbf{F}^{\mathrm{step}}_{0,T}$,  there exists $\eta^{f,x, \gamma}>0$ such that
\be\label{2/3}
\inf_{x'\in B(x, \gamma\rho), 0\leq S\leq Q\leq T}\Pp\left(\sup_{t\in [S, Q]}|Y_t^{x',S,\eta}-\phi_t^{x,S, f}|\leq \gamma\right)\geq\frac23
\ee
for any $\eta\in (0, \eta^{f,x, \gamma}]$. By \eqref{E_bound}, for any such  $\eta$  there exists $N^{f, \eta}$ large enough for
$$
P(\mathcal{E}^{f, \eta}\geq \frac{1}{N^{f, \eta}})\geq \frac23.
$$
Then for any $x'\in B(x, \gamma\rho), 0\leq S\leq Q\leq T$ we have
$$
\Pp\left(\sup_{t\in [S, Q]}|Y_t^{x',S,\eta}-\phi_t^{x,S, f}|\leq \gamma, \mathcal{E}^{f, \eta}\geq \frac{1}{N^{f, \eta}} \right)\geq \frac13
$$
and
$$\ba
\Pp\left(\sup_{t\in [S, Q]}|X_t^{x',S,\eta, \mathrm{trunc}}-\phi_t^{x,S, f}|\leq \gamma\right)&\\&\hspace*{-3cm}\geq  \frac{1}{N^{f, \eta}}\Pp\left(\sup_{t\in [S, Q]}|Y_t^{x',S,\eta}-\phi_t^{x,S, f}|\leq \gamma, \mathcal{E}^{f, \eta}\geq \frac{1}{N^{f, \eta}} \right)\\&\hspace*{-3cm}\geq \frac1{3N^{f, \eta}}>0,
\ea
$$
which completes the proof. \qed

\appendix
\section{Proof of \eqref{phi_delta}}\label{sA}
For given $s_1, \dots, s_K$ define  $\lambda^{s_1, \dots, s_K}\in \Lambda_{0,T}$ as a function,  linear on each of the intervals $[0, t_1], \dots [t_K, T]$,
and satisfying
$$
\lambda^{s_1, \dots, s_K}_{t_j}=s_j, \quad j=1, \dots, K.
$$
Denote
$$
\wt\phi^{s_1, \dots, s_K}_t=\phi^{s_1, \dots, s_K}_{\lambda^{s_1, \dots, s_K}_t}, \quad f^{s_1, \dots, s_K}_t=\phi^{s_1, \dots, s_K}_{\lambda^{s_1, \dots, s_K}_t},\quad t\in [0, T].
$$
It is clear that, for a fixed set $\{t_k\}$,
\be\label{der}
\sup_{{|t_k-s_k|<\delta, k=1, \dots,K}}\left|\frac{\di \lambda^{s_1, \dots, s_K}_t}{\di t}-1\right|\to 0, \quad \delta\to 0,
\ee
and, for a given $f\in \mathbf{F}^{\mathrm{step}}_{0,T}$,
\be\label{f}
\sup_{{|t_k-s_k|<\delta, k=1, \dots,K}}\|f^{s_1, \dots, s_K}-f\|_{L_1(0, T)}\to 0, \quad \delta\to 0.
\ee
By \eqref{der} we have
$$
\sup_{{|t_k-s_k|<\delta, k=1, \dots,K}} \sup_{0\leq s<t\leq T}\left|\log\frac{\lambda_t^{s_1, \dots, s_K}-\lambda_s^{s_1, \dots, s_K}}{t-s}\right|\to 0, \quad \delta\to 0,
$$
hence in order to prove \eqref{phi_delta} it is sufficient to prove that
\be\label{phi_Delta}
\sup_{{|t_k-s_k|<\delta, k=1, \dots,K}}\sup_{t\in [0, T]}|\wt\phi^{s_1, \dots, s_K}_t-\phi_t|\to 0, \quad \delta\to 0.
\ee
The functions $\wt\phi^{s_1, \dots, s_K},\phi$  have the same set $\{t_k\}$ of discontinuity points. Moreover,
$$
\wt\phi^{s_1, \dots, s_K}_{t_k}=\phi_{t_k}= y_k, \quad k=0, \dots, K,
$$
and on each of the time intervals $[0, t_1), [t_1, t_2), \dots, [t_K, T]$ function $\wt\phi^{s_1, \dots, s_K}$ satisfies the ODE
$$
\di \wt \phi_t^{s_1, \dots, s_K}=\Big(\widetilde b(\wt \phi_t^{s_1, \dots, s_K})+\sigma(\wt \phi_t^{s_1, \dots, s_K}) f^{s_1, \dots, s_K}_t\Big)\frac{\di \lambda^{s_1, \dots, s_K}_t}{\di t}\, \di t.
$$
Function $\phi$ at each of these intervals satisfies a similar ODE \eqref{ODE} with the same initial value; in addition, it is easy to verify that the values of the functions $ f^{s_1, \dots, s_K}$, $\wt \phi_t^{s_1, \dots, s_K}$ are uniformly bounded for $|t_k-s_k|<\delta, k=1, \dots,K$. This yields, by the Lipschitz continuity of $\wt b, \sigma$,
$$\ba
|\phi_t^{s_1, \dots, s_K}-\phi_t|&\leq C\sup_{s\in [0, T]}\left|\frac{\di \lambda^{s_1, \dots, s_K}_s}{\di s}-1\right|+C\|f^{s_1, \dots, s_K}-f\|_{L_1(0, T)}
\\&+C\int_{t_k}^t|\phi^{s_1, \dots, s_K}_s-\phi_s|\, \di s, \quad t\in [t_k,t_{k+1}), \quad k=0, \dots, K
\ea
$$
with a constant $C$ which does not depend on the choice of $\{s_k\}$.  By the Gronwall inequality, this gives
$$
\sup_{t\in [0, T]}|\phi_t^{s_1, \dots, s_K}-\phi_t|\leq Ce^{CT}\left(\sup_{s\in [0, T]}\left|\frac{\di \lambda^{s_1, \dots, s_K}_s}{\di s}-1\right|+\|f^{s_1, \dots, s_K}-f\|_{L_1(0, T)}\right).
$$
Combined with \eqref{der},\eqref{f} this proves \eqref{phi_Delta} and completes the proof of \eqref{phi_delta}.\qed


\begin{thebibliography}{99}
\bibliographystyle{plain}





\bibitem{Bil68} P. Billingsley, Convergence of probability measures.  John Wiley, New York, 1968.

\bibitem{HMS11}
{M.~Hairer}, {J.C.~Mattingly}, {M.~Scheutzow} (2011). Asymptotic coupling and a general form of Harris’ theorem with applications to stochastic delay equations. \textit{Probab. Theory Related Fields}. \textbf{149} 223--259.


\bibitem{IkWa}   N. Ikeda, S. Watanabe, Stochastic Differential Equations and Diffusion Processes. North-Holland,
Amsterdam,  1989.

\bibitem{Ishikawa} Ya. Ishikawa (2001) Support theorem for jump processes of canonical type. Proc. Japan Acad., \textbf{77}, 79 -- 83.




\bibitem{KR17}
T. Kulczycki, M. Ryznar, Transition density estimates for diagonal systems of SDEs driven by cylindrical $\alpha$-stable process, \emph{ALEA Lat. Am. J. Probab. Math. Stat.} 15  (2018) 1335--1375.

\bibitem{KR19}
T. Kulczycki, M. Ryznar, Semigroup properties of solutions of SDEs driven by L\'evy processes with independent coordinates, \emph{Stochastic Process. Appl.} 130 (2020) 7185--7217.

\bibitem{Kulik} A. Kulik (2009) Exponential ergodicity of the solutions to SDEs with a jump noise. \emph{Stochastic Processes
and Applications}, \textbf{119}, 602 –- 632.

\bibitem{Kunita} H. Kunita,  Canonical stochastic differential equations based
on L\'evy processes and their supports. In: Stochastic Dynamics. Springer, Berlin, 1999, 283 -- 304.


\bibitem{PZ} E. Priola, J. Zabczyk  (2011) Structural properties of semilinear SPDEs driven by cylindrical stable
processes, \emph{Probab. Theory Related Fields} \textbf{149}, 97--137.

\bibitem{Simon}  T. Simon (2000) Support theorem for jump processes. \emph{Stochastic Processes and their Applications} \textbf{89}  1--30.

\bibitem{Simon2} T. Simon (2001) Sur les petites d\'eviations d'un processus de L\'evy. \emph{Potential Analysis} \textbf{14} (2), 155--173.

\bibitem{Simon3} T. Simon (2002) Support d'une \'equation d'It\^o avec sauts en dimension 1. \emph{S\'eminaire de probabilit\'es de Strasbourg}, Tome 36, pp. 314--330.
    
\bibitem{Simon4} T. Simon (2003) Small deviations in p-variation for
multidimensional L\'evy processes. \emph{J. Math. Kyoto Univ.} \textbf{43} (3), 523--565

\bibitem{Skor} A.V. Skorokhod (1957) On the Differentiability of Measures Which Correspond to Stochastic Processes. I. Processes with Independent Increments, \emph{Theory of Probability and its Applications}, \textbf{{2}}, 407--432.

    \bibitem{StroockWaradhan} Stroock, D.W., Varadhan, S.R.S. (1972) On the support of diffusion processes with applications to the strong
maximum principle. In: Proceedings of Sixth Berkeley Symposium on Math. Stat. Probability, Vol. III.
Univ. California Press, Berkeley, pp. 333--359.

\end{thebibliography}
\end{document}